\pgfplotsset{compat=1.15}
\definecolor{wrwrwr}{rgb}{0.3803921568627451,0.3803921568627451,0.3803921568627451}
\newcommand{\floor}[1]{\left\lfloor{#1}\right\rfloor}
\newcommand{\ceil}[1]{\left\lceil{#1}\right\rceil}
\newtheorem{theorem}{Theorem}
\newtheorem{claim}{Claim}
\newtheorem{conjecture}{Conjecture}
\newtheorem{observation*}{Observation}
\newtheorem{lemma}{Lemma}
\newcommand{\ex}{{\rm  ex}}
\newcommand{\Mod}[1]{\ (\mathrm{mod}\ #1)}
\title{Generalized Tur\'an numbers for the edge blow-up of a graph}
\author[1,3]{Zequn Lv}
\author[1]{Ervin Győri}
\author[1,3]{Zhen He\footnote{Correspondoing author. Email: {hz18@mails.tsinghua.edu.cn}}}
\author[1,5]{Nika Salia}
\author[1]{Casey Tompkins}
\author[1,4]{Kitti Varga}
\author[1,2]{Xiutao Zhu}
\date{}
\affil[1]{Alfr\'ed R\'enyi Institute of Mathematics, Hungarian Academy of Sciences. }
\affil[2]{Department of Mathematics, Nanjing University.}
\affil[3]{Department of Mathematical Sciences, Tsinghua University.}
\affil[4]{Department of Computer Science and Information Theory, Budapest University of Technology and Economics.}
\affil[5]{Extremal Combinatorics and Probability Group, Institute for Basic Science, Daejeon, South Korea.}
\begin{document}
\maketitle
\begin{abstract}
Let $H$ be a graph and $p$ be an integer.
The edge blow-up $H^p$ of $H$ is the graph obtained from replacing each edge in $H$ by a copy of $K_p$ where the new vertices of the cliques are all distinct. 
Let $C_k$ and $P_k$ denote the cycle and path of length~$k$, respectively.
In this paper, we find sharp upper bounds for $\ex(n,K_3,C_3^3)$ and the exact value for $\ex(n,K_3,P_3^3)$. Moreover, we determine the graphs attaining these bounds.
\end{abstract}

\section{Introduction}
 \textbf{Notation. }In this paper, we use $C_k$, $P_k$, $M_k$ and $S_k$ to denote the cycle, path, matching and star with $k$ edges, respectively. 
 Let $K_t$ be the complete graph on $t$ vertices and  $K_{s,t}$ be the complete bipartite graph with parts of size $s$ and $t$. 
 The vertex and edge sets of a graph $G$ are denoted by $V(G)$ and $E(G)$, respectively.  
 Also we denote the number of edges in $G$ by $e(G)$.
 For two graph $G$ and $H$, let $G\cup H$ denote the disjoint union of $G$ and $H$. 
 Let $G+H$ denote the join of $G$ and $H$, which is obtained from $G\cup H$ by adding all edges with one endvertex in $V(G)$ and the other endvertex in $V(H)$.  
 Let $T(G)$ denote the set of all triangles in $G$ and $t(G)=|T(G)|$.
 For a vertex $v$ in $V(G)$, let $t(v)$ denote the number of triangles containing $v$. 
 For an edge~$uv$, let $N(uv)=N(u)\cap N(v)$. 
Hence, $|N(uv)|$ is the number of triangles containing $uv$. 
For a set of vertices $S \subseteq V(G)$ we denote by $G[S]$ the induced subgraph of $G$ on $S$ and we set $G-S=G[V(G) - S]$.
For two disjoint sets of vertices $U,W \subseteq V(G)$ we denote by $G[U,W]$ the bipartite subgraph of $G$ consisting of those edges with one endvertex in $U$ and the other in $W$. 

\vskip 3mm
Let $H$ be a given graph and $p$ be an integer greater than $2$. 
The edge blow-up $H^p$ of $H$ is the graph obtained from replacing each edge in $H$ by a copy of $K_p$ where the new vertices of the cliques are all distinct.
The problem of finding the Tur\'an number of $H^p$ for various graphs $H$ has attracted a lot of attention.
The first results on the topic can be dated back to 1960s.  
Moon~\cite{Moon}, and independently  Simonovits~\cite{simonovits1968method} determined the Tur\'an number $\ex(n,M_k^p)$ for $p\ge 3$. 
Much later Erd\H{o}s, F\"uredi, Gould and Gunderson~\cite{ERDOS199589} determined the Tur\'an number $\ex(n,S_k^p)$ for $p=3$, and then Chen, Gould, Pfender and Wei~\cite{chen2003extremal} extended this result to any $p\ge 3$.
Glebov~\cite{glebov2011extremal} determined the Tur\'an number of~$P_k^p$. 
More recently, Liu extended Glebov's result to the edge blow-up of a family of trees and also determined $C_k^p$ for sufficiently large $n$.  
Wang, Hou, Liu and Ma~\cite{Wang2021TheTN} determined the $\ex(n,T^P)$ for a larger family of trees and Yuan~\cite{Yuan2022ExtremalGF} determined $\ex(n,H^P)$ for any non-bipartite graph $H$ and $p\ge \chi(H)+1$.   

We will make use of the following result of Xiao, Katona, Xiao and Zamora~\cite{XIAO20221}, which determined  the value of $\ex(n,C_3^3)$ for all $n\ge 6$.

\begin{theorem}(Xiao, Katona, Xiao and Zamora~\cite{XIAO20221})\label{edge verion}
Let $n\ge 6$ be an integer, then
\begin{align*}
\ex(n, C_3^3)=
\begin{cases} 
\lfloor\frac{n^2}{4}\rfloor +\lfloor\frac{n}{2}\rfloor  & \mbox{if  $n \not\equiv 2\Mod 4$},\\
\frac{n^2}{4}+\frac{n}{2}-1 & \mbox{if\/ $n \equiv 2\Mod 4$}.\\
\end{cases}
\end{align*}
When $n=4k$, $M_{\frac{n}{4}}+M_{\frac{n}{4}}$ is the unique extremal graph.\\
When $n=4k+1$, $(M_{\lfloor\frac{n}{4}\rfloor}\cup K_1)+M_{\frac{n-1}{4}}$ and $S_{\lfloor\frac{n}{2}\rfloor}+\overline{K_{\lfloor\frac{n}{2}\rfloor}}$ are the extremal graphs.\\
When $n=4k+2$,  $(M_{\lfloor\frac{n}{4}\rfloor}\cup K_1)+(M_{\lfloor\frac{n}{4}\rfloor}\cup K_1)$, $M_{\lceil\frac{n}{4}\rceil}+M_{\lfloor\frac{n}{4}\rfloor}$ and $S_{\frac{n}{2}-1}+\overline{K_\frac{n}{2}}$ are the extremal graphs.\\
When $n=4k+3$, $(M_{\lfloor\frac{n}{4}\rfloor}\cup K_1)+M_{\lceil\frac{n}{4}\rceil}$ and $S_{\lfloor\frac{n}{2}\rfloor}+\overline{K_{\lfloor\frac{n}{2}\rfloor}}$ are the extremal graphs.
\end{theorem}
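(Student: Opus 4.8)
My plan is to separate the lower bound (the constructions) from the upper bound and the extremal analysis. Recall that $C_3^3$ consists of a triangle $abc$ together with three further vertices $x,y,z$, where $x$ is joined to $a,b$, the vertex $y$ to $b,c$, and $z$ to $c,a$. Thus a graph $G$ contains $C_3^3$ if and only if there is a triangle $abc$ admitting a system of distinct representatives for the three link sets $N(ab)\setminus\{c\}$, $N(bc)\setminus\{a\}$, $N(ca)\setminus\{b\}$. By Hall's theorem such a system fails to exist exactly when one of these sets is empty, or two of them have union of size at most $1$, or all three have union of size at most $2$. For the lower bound I would verify each listed graph is $C_3^3$-free by checking that every triangle has an edge lying in no other triangle (equivalently, an empty link set); counting edges then recovers the stated values.

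For the upper bound I would fix a bipartition $V(G)=A\cup B$ maximizing the number of crossing edges and write $e(G)=e(A,B)+e(G[A])+e(G[B])$ with $e(A,B)\le |A|\,|B|\le\lfloor n^2/4\rfloor$. The heart of the argument is to bound the number of internal edges $e(G[A])+e(G[B])$. Because the cut is maximal, no vertex has more internal than crossing neighbours, so when $G$ is dense each internal edge $uv$ of $G[A]$ lies in many triangles $uvb$ with $b\in B$. Applying the containment criterion to such a triangle, the link set $N(uv)\setminus\{b\}$ is large, so $C_3^3$-freeness forces the union of the remaining two link sets $N(ub)\setminus\{v\}$ and $N(vb)\setminus\{u\}$ to have size at most $1$, for \emph{every} common neighbour $b$. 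Unwinding these link sets shows they are governed by the internal edges incident to $u,v$ (inside $A$) and to $b$ (inside $B$); the constraint therefore forces $G[A]$ and $G[B]$ to be very sparse — each a matching or a star — and, after accounting for shared apexes, that the total number of internal edges is at most $\lfloor n/2\rfloor$.

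Combining $e(A,B)\le |A|\,|B|$ with the internal-edge bound gives $e(G)\le\lfloor n^2/4\rfloor+\lfloor n/2\rfloor$. To reach the exact values, and in particular the $-1$ correction when $n\equiv 2\pmod 4$, I would track equality throughout: the cut term is maximized only for a balanced (or near-balanced) bipartition, whereas a matching saturating the internal bound imposes its own parity constraints on $|A|$ and $|B|$, and these two requirements cannot always be met simultaneously. Resolving this tension residue class by residue class pins down exactly how the internal matchings or stars may be placed, which yields both the sharp constant and the enumeration of extremal graphs — the join of two (near-)perfect matchings, and the degenerate configuration $S_k+\overline{K_m}$ arising when one side carries a star in place of a matching.

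The main obstacle is the second step: converting the purely local per-triangle Hall condition into the clean global conclusion that the internal graphs are matchings or stars totalling at most $\lfloor n/2\rfloor$ edges, without sacrificing the sharp linear term. The difficulty is that the link sets mix contributions from internal edges on both sides of the cut, so the constraints coming from different apex vertices $b$ interact. The most delicate point is the sparse (non-dense) regime, where an internal edge need not lie in many triangles and the maximal-cut leverage weakens; this case may well require a separate supersaturation or stability input. By contrast, the parity bookkeeping and the extremal enumeration of the third step should be routine, if lengthy, once the structure of the second step is established.
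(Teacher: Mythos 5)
First, a point of order: the paper does not prove this statement at all --- it is Theorem~\ref{edge verion}, quoted from Xiao, Katona, Xiao and Zamora and used as a black box in the proof of Theorem~\ref{Cycle}. So there is no internal proof to compare against, and your proposal must be judged on its own merits. As it stands, it has two genuine gaps.

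The first gap is in your lower bound. Your Hall/SDR criterion for containment of $C_3^3$ is correct (the three representatives are automatically distinct from the triangle's vertices, and Hall's condition fails exactly in the three ways you list), but the shortcut you propose for verifying the constructions --- ``every triangle has an edge lying in no other triangle'' --- is false precisely for the main extremal graphs. In $M_{n/4}+M_{n/4}$, take a triangle $uvb$ with $uv$ a matching edge on one side and $b$ on the other side: the edge $uv$ lies in $\frac{n}{2}-1$ further triangles, and $ub$, $vb$ each lie in the further triangle through $b$'s matching partner $b'$. These graphs are $C_3^3$-free not because some link set is empty, but because $N(ub)\setminus\{v\}=N(vb)\setminus\{u\}=\{b'\}$, i.e.\ the two-set Hall condition fails. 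The parity-sensitive constant (the $-1$ when $n\equiv 2\Mod 4$) lives exactly in this fine structure, so the lower-bound verification cannot be waved through with the empty-link-set test.

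The second and more serious gap is the heart of the upper bound, which you assert rather than prove, and partially concede yourself. The claim that the per-triangle Hall constraints force $G[A]$ and $G[B]$ to be matchings or stars with at most $\lfloor n/2\rfloor$ internal edges in total is exactly the difficult content of the theorem; flagging that the sparse regime ``may well require a separate supersaturation or stability input'' is an acknowledgment that no argument is given where one is most needed. There is also a flaw in the accounting: you bound $e(A,B)\le |A|\,|B|$ and the internal edges by $\lfloor n/2\rfloor$ \emph{separately}, but these bounds interact. When the max cut is unbalanced, $|A|\,|B|$ drops below $\lfloor n^2/4\rfloor$ and you must rule out compensating with more internal edges (a star inside $A$ may have up to $|A|-1$ edges, and only the max-cut degree condition $|B|\ge |A|-1$, which you never invoke, keeps this near $n/2$); conversely, when both sides carry internal edges, one must show a star on one side forces the other side to be internally empty, and small configurations here are delicate (for instance, the join of a single edge with a two-edge star is $C_3^3$-free, so ``star $+$ star is forbidden'' is not even true without size hypotheses). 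Without a joint optimization over $|A|$, $|B|$ and the two internal structures, even the crude bound $\lfloor n^2/4\rfloor+\lfloor n/2\rfloor$ does not follow from your outline, let alone the exact constants and the uniqueness claims.
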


In this paper, we will consider the generalized Tur\'an number.
Let $T$ and $H$ be graphs, then the generalized Tur\'an number $\ex(n,T,H)$ is the maximum number of copies of $T$ that an $n$-vertex $H$-free graph $G$ can contain. 
If $T=K_2$, then $\ex(n,T,H)$ is the classical Tur\'an number of $H$.

Although several results about the Tur\'an number of an edge blow-up of a graph have been obtained, less is known about the generalized Tur\'an number of such graphs.  
However, there have been some results in this direction.  
Liu and Wang~\cite{liu2021generalized} determined the value of $\ex(n,K_p, S_2^p)$ and $\ex(n,K_p, M_2^p)$. 
Later Gerbner and  Patk\'os~\cite{gerbner2021generalized} determined $\ex(n,K_r, S_2^p)$ and $\ex(n,K_r, M_2^p)$ for any $r$, $p$, and Yuan and Yang~\cite{Yuanxiaoli}  determined $\ex(n,K_3, M_2^3)$ for all~$n$. 
Recently, Zhu, Chen, Gerbner, Gy\H{o}ri and Hama Kairm~\cite{zhufriendship} determined $\ex(n,K_3, S_k^3)$ for any $k$.

Our results concern the edge blow-ups of cycles and paths. 
We prove the following theorems.

\begin{theorem}\label{Cycle}
Let $n\ge 22$ be an integer, we have 
\[\ex(n,K_3, C_3^3)\le \frac{n^2}{4}-1+\mathbbm{1}_{4|n},\]
where $\mathbbm{1}_{4|n}$ is the indicator function for $4|n$.
Furthermore, equality holds when $n$ is even and   $M_{\lceil\frac{n}{4}\rceil}+M_{\lfloor\frac{n}{4}\rfloor}$ is the unique extremal graph.
\end{theorem}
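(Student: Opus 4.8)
My plan is to reduce the upper bound to a local analysis of how triangles can overlap, and only then optimize globally. I would first record that the claimed extremal graph $G_0=M_{\lceil n/4\rceil}+M_{\lfloor n/4\rfloor}$ is $C_3^3$-free and count its triangles: writing $a=\lceil n/4\rceil$ and $b=\lfloor n/4\rfloor$, every triangle of $G_0$ uses exactly one matching edge together with a vertex on the opposite side, so $t(G_0)=2ab+2ab=4ab$, and by AM--GM $4ab\le(a+b)^2=n^2/4$, with the integrality defect for $2a=2b$ impossible exactly accounting for the $-1+\mathbbm 1_{4|n}$. Its $C_3^3$-freeness comes from the fact that the two ``cross'' edges of any triangle share a single external common neighbour (the matching partner of the apex), so their neighbourhoods cannot be extended by distinct new vertices.

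For the upper bound, let $G$ be $C_3^3$-free. The key observation is that a triangle $abc$ is the centre of a copy of $C_3^3$ precisely when the three sets $N(ab)\setminus\{c\}$, $N(bc)\setminus\{a\}$, $N(ca)\setminus\{b\}$ admit a system of distinct representatives. Hence $C_3^3$-freeness is equivalent to the statement that for \emph{every} triangle these three sets violate Hall's condition: some single set is empty (its edge lies in only one triangle), some pair has union of size at most $1$, or all three have union of size at most $2$. This is the master local constraint I would exploit everywhere.

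Next I would call an edge $uv$ \emph{heavy} if $\abs{N(uv)}\ge 3$ and \emph{light} otherwise, and show that a triangle carrying two or three heavy edges is rigid: the Hall failure above forces the relevant external neighbourhoods to coincide in a common $2$-element set, so each heavy edge of such a triangle lies in exactly three triangles and the few vertices involved span a near-$K_5$ blob. Such blobs can be bounded in number and shown to contribute only a lower-order amount, after which the heavy edges form a matching and every surviving triangle has exactly one heavy edge. I expect this to be the main obstacle: rigorously controlling the exceptional dense configurations and proving that the heavy edges are essentially a matching. This is almost certainly where the hypothesis $n\ge 22$ is consumed, and where Theorem~\ref{edge verion} can be invoked to bound $e(G)$ and hence the number of light edges, absorbing the remaining lower-order terms.

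Finally I would charge each triangle to its unique heavy edge, so that $t(G)=\sum_{e\text{ heavy}}\abs{N(e)}$ up to the lower-order correction. The heavy matching pairs up at most $n$ vertices into classes $\{u,v\}$, and for each heavy edge the apex set $N(uv)$ sits on the opposite vertices; bounding $\sum\abs{N(e)}$ by a product and optimizing under the vertex budget returns $\max 4ab\le n^2/4$, with integrality supplying the $-1+\mathbbm 1_{4|n}$. For the equality case with $n$ even, I would run the equality conditions backwards: simultaneous tightness forces the heavy edges to be a perfect matching, the bipartite graph between the two classes to be complete, and no further edges to be present, which pins down $G$ as exactly $M_{\lceil n/4\rceil}+M_{\lfloor n/4\rfloor}$.
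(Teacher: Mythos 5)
Your preliminaries are fine: the triangle count $4ab=\frac{n^2}{4}-1+\mathbbm{1}_{4|n}$ in $M_{\lceil n/4\rceil}+M_{\lfloor n/4\rfloor}$ is correct, and the reformulation that $C_3^3$-freeness means the three sets $N(ab)\setminus\{c\}$, $N(bc)\setminus\{a\}$, $N(ca)\setminus\{b\}$ fail Hall's condition at \emph{every} triangle $abc$ is exactly right (the paper uses this fact implicitly throughout its case analysis). But the core of your argument has three genuine gaps. First, heavy edges do \emph{not} form a matching in a general $C_3^3$-free graph, even after removing all triangles with two heavy edges: take two books $K_2+\overline{K_k}$ whose spine edges share a vertex and whose pages are disjoint and mutually non-adjacent; this graph is $C_3^3$-free, its two heavy edges meet at a vertex, and no triangle contains two heavy edges. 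So "heavy edges are essentially a matching" could at best be a property of an \emph{extremal} graph, and you supply no mechanism for deriving it from extremality. Second, the charging $t(G)=\sum_{e\ \mathrm{heavy}}|N(e)|$ "up to lower order" ignores triangles all of whose edges are light; since a light edge may lie in two triangles, such triangles can number a constant fraction of $e(G)$, which is not lower order. Third, and most seriously, the final optimization "$\sum_{e\ \mathrm{heavy}}|N(e)|\le 4ab\le n^2/4$ under the vertex budget" is unproven: writing the sum as $\sum_w d_M(w)$, where $d_M(w)$ counts heavy edges having $w$ as apex, one only gets $d_M(w)\le d(w)/2$ a priori, i.e.\ a bound near $n^2/2$. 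Improving this to $n^2/4$ requires showing that the heavy edges and their apex sets interlock in the bipartite fashion of the extremal example — but that structure is precisely what the theorem asserts, so as written this step is circular. The equality case inherits all of these problems.

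For contrast, the paper avoids any global classification of heavy edges. It assigns each edge the weight $1/|N(e)|$, proves via the Hall-type local analysis that every triangle has weight at least $1+\frac{1}{n-2}$ unless it sits in a $K_5^-$, rules out $K_5^-$ in an extremal graph by an edge count using Theorem~\ref{edge verion}, and then upgrades the bound to an \emph{average} triangle weight of at least $1+\frac{2}{n}$ by pairing the exceptional $K_4$-configurations and transferring weight. Summing weights over triangles gives $t(G)\le \frac{n}{n+2}e(G)\le \frac{n}{n+2}\left(\frac{n^2}{4}+\frac{n}{2}\right)=\frac{n^2}{4}$, where the factor $\frac{n}{n+2}$ exactly absorbs the $+\frac{n}{2}$ surplus in the known value of $\ex(n,C_3^3)$, and uniqueness falls out of the equality analysis in Theorem~\ref{edge verion}. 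If you wish to rescue your structural route, the missing ingredient is an argument that vertex-sharing heavy edges, multi-heavy triangles, and all-light triangles each force an edge deficit (again via Theorem~\ref{edge verion}) that compensates their triangle contribution — which is, in effect, what the paper's weighting scheme accomplishes automatically.
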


\begin{theorem}\label{Path}
Let $n\ge 300^3$ be an integer. We have
\[\ex(n,K_3, P_3^3)=\left\lfloor\frac{(n-1)^2}{4}\right\rfloor,\]
and the unique extremal graph is  $K_1+K_{\floor{\frac{n-1}{2}},\ceil{\frac{n-1}{2}}}$.
\end{theorem}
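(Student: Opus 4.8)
The plan is to prove matching lower and upper bounds, with essentially all of the work in the upper bound. \textbf{For the lower bound}, I would verify that $G_0:=K_1+K_{\floor{(n-1)/2},\ceil{(n-1)/2}}$ is $P_3^3$-free with exactly $\floor{(n-1)^2/4}$ triangles. Writing $v$ for the apex, the graph $G_0-v$ is bipartite and hence triangle-free, so every triangle of $G_0$ passes through $v$; in particular $G_0$ has no two vertex-disjoint triangles. Since the two end-cliques of $P_3^3$ (the triangles $\{v_0,v_1,w_1\}$ and $\{v_2,v_3,w_3\}$) are vertex-disjoint, $G_0$ cannot contain $P_3^3$. The triangles of $G_0$ are in bijection with the edges of $G_0-v=K_{\floor{(n-1)/2},\ceil{(n-1)/2}}$, of which there are $\floor{(n-1)^2/4}$, giving $\ex(n,K_3,P_3^3)\ge\floor{(n-1)^2/4}$.

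\textbf{Concentration (the crux).} Now let $G$ be $P_3^3$-free on $n$ vertices with $t(G)\ge\floor{(n-1)^2/4}$. The heart of the proof is to show that there is a single vertex $v$ through which all but $O(n)$ of the triangles pass. This cannot come from averaging alone, since the mean triangle-degree is only linear, and must be extracted from $P_3^3$-freeness itself. I would work with a maximum family $\mathcal{P}$ of pairwise vertex-disjoint triangles and set $W=V(\mathcal{P})$: by maximality $G-W$ is triangle-free and every triangle meets $W$. The forbidden subgraph translates into the statement that between any two members of $\mathcal{P}$ there is no triangle using exactly one vertex of each and a third vertex outside both; iterating this over all pairs of $\mathcal{P}$ forces the disjoint triangles to be almost non-interacting, so that a large triangle packing is incompatible with having $\Omega(n^2)$ triangles. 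Hence $|\mathcal{P}|$ is bounded, and a short further argument should single out one vertex of $W$ lying in almost every triangle. Making this quantitative is the main obstacle.

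\textbf{Finishing with Mantel.} Granting concentration, set $H:=G[N(v)]$, so that $t(v)=e(H)\ge t(G)-O(n)=\Omega(n^2)$ and $H$ is dense. I claim $H$ is triangle-free. Indeed, if $abc$ were a triangle of $H$, then density of $H$ lets me choose an edge $xy$ of $H$ disjoint from $\{a,b,c\}$ and a vertex $z\in N(a)\cap N(v)$ outside $\{b,c,x,y\}$; then $\{a,b,c\}$ and $\{v,x,y\}$ are vertex-disjoint triangles, while $\{a,v,z\}$ is a triangle meeting the first only in $a$ and the second only in $v$, with $z$ outside both --- precisely a copy of $P_3^3$, a contradiction. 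Therefore, writing $d(v)=|N(v)|$, Mantel's theorem gives $t(v)=e(H)\le\floor{d(v)^2/4}\le\floor{(n-1)^2/4}$.

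\textbf{Off-$v$ triangles, exactness, and uniqueness.} It remains to bound the triangles avoiding $v$ and to force equality. Setting $R=V(G)\setminus(N(v)\cup\{v\})$, every triangle either contains $v$, lies inside $N(v)$ (impossible, as $H$ is triangle-free), or meets $R$; thus $t(G)=e(H)+t_0$, where $t_0$ counts triangles meeting $R$ and avoiding $v$, and $d(v)=n-1-|R|$. The delicacy is that the naive estimate is too weak: since
\[\floor{\tfrac{d(v)^2}{4}}+|R|\cdot\tfrac{n-1}{2}=\floor{\tfrac{(n-1)^2}{4}}+\tfrac{|R|^2}{4},\]
a bound of the form $t_0\le|R|(n-1)/2$ would not even yield the theorem. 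I must instead use $P_3^3$-freeness once more to show that a triangle meeting $R$, together with the dense structure of $H$, almost always completes a $P_3^3$, so that $t_0$ falls short of the deficit $\floor{(n-1)^2/4}-\floor{d(v)^2/4}$ unless $R=\emptyset$. Because the entire gap between $\floor{(n-1)^2/4}$ and competing configurations is only linear in $n$, this step must be tight to within $o(n)$, which is what forces the threshold $n\ge 300^3$. Finally, equality $t(G)=\floor{(n-1)^2/4}$ forces $R=\emptyset$, no triangle off $v$, and $e(H)=\floor{(n-1)^2/4}$ with $H$ triangle-free; the equality case of Mantel's theorem then pins down $H=K_{\floor{(n-1)/2},\ceil{(n-1)/2}}$ with $v$ adjacent to everything, i.e. $G=K_1+K_{\floor{(n-1)/2},\ceil{(n-1)/2}}$, establishing uniqueness.
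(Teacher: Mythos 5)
Your lower bound and your Mantel finish are fine in outline, but the two steps that carry all the difficulty --- the ``concentration'' step and the bound on triangles avoiding $v$ --- are not proved, and the mechanism you propose for the first one fails as stated. You claim that $P_3^3$-freeness bounds the triangle packing number, and that a bounded packing together with $\Omega(n^2)$ triangles then ``singles out one vertex lying in almost every triangle.'' That implication is false: take two disjoint copies of $K_1+K_{m,m}$ with $m=\lfloor(n-2)/4\rfloor$. This graph is $P_3^3$-free (each component has no two vertex-disjoint triangles, while $P_3^3$ is connected and contains two disjoint triangles), its maximum triangle packing has size $2$, and it has roughly $n^2/8=\Omega(n^2)$ triangles, yet no vertex lies in more than half of them. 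So concentration around a single vertex cannot follow from packing-boundedness plus $\Omega(n^2)$ triangles; it must exploit the full near-extremal hypothesis $t(G)\ge\lfloor(n-1)^2/4\rfloor$ and must rule out the graph splitting into two or more such ``books,'' whether as separate components or as loosely attached clusters. Likewise, in your last step you concede that the naive bound on triangles meeting $R=V(G)\setminus(N(v)\cup\{v\})$ is too weak and that a tighter argument ``must'' exist; that argument is exactly what is missing. As it stands, the proposal is a program whose crux (both quantitative steps) is left open.

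For comparison, the paper closes these gaps by a different route. It first runs a deletion process: in the current $j$-vertex graph it removes a vertex $u$ with $t(u)<\frac{j}{2}-1$, or a pair $u,v$ with $t(u,v)<j-2$; since each removal destroys fewer triangles than the corresponding decrease of $\lfloor(j-1)^2/4\rfloor$, a terminal graph on $\ell<n$ vertices would have strictly more than $\lfloor(\ell-1)^2/4\rfloor$ triangles, contradicting the key lemma, so in fact nothing is ever removed and one may assume these minimum triangle-degree conditions outright. Then, instead of proving concentration from scratch, it splits according to whether some component contains two vertex-disjoint triangles. If no component does, the theorem of Yuan and Yang on $\ex(n,K_3,M_2^3)$ applied to each component, combined with convexity across components, gives both the bound and the uniqueness --- this subsumes your Mantel step and disposes of the multi-cluster configurations above. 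If some component does contain two disjoint triangles, a minimum-distance argument shows two such triangles must be joined by an edge, and the local structure around that edge, under the triangle-degree conditions, caps $t(G)$ at $(n-11)\binom{11}{2}+\binom{11}{3}<\lfloor(n-1)^2/4\rfloor$. If you want to salvage your route, the per-component analysis and the role of $\ex(n,K_3,M_2^3)$ are precisely the ingredients you would need to reconstruct.
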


The rest of the paper is organized as follows. In Section~\ref{ProofC3}, we prove Theorem~\ref{Cycle}. In Section~\ref{ProofP}, we prove Theorem~\ref{Path}. In Section~\ref{concl}, we mention some problems about the general case: $\ex(n,K_3, C_k^3)$ and $\ex(n,K_3,P_k^3)$.

\section{Proof of Theorem \ref{Cycle}}\label{ProofC3}
One can see that when $n$ is even, the graph $M_{\lceil\frac{n}{4}\rceil}+M_{\lfloor\frac{n}{4}\rfloor}$  contains $\frac{n^2}{4}-1+\mathbbm{1}_{4|n}$ triangles. 
So our aim is to show that $\ex(n,K_3,C_3^3)\le \frac{n^2}{4}-1+\mathbbm{1}_{4|n}$. 

Let $n\ge 22$ be an integer and $G$ be an $n$-vertex $C_3^3$-free graph with $t(G)$ being the maximum.
We may assume every edge is contained in at least one triangle, otherwise we delete this edge.

We define the weight of $uv$ by 
\[
w(uv)\coloneqq \frac{1}{|N(uv)|}.
\]
For a triangle $xyz$, its weight is defined by $w(xyz)=w(xy)+w(xz)+w(yz)$.
We will prove the upper bound by making use of the following claims.
\begin{claim}\label{ weight of triangle}
For any triangle $xyz$ in $G$, 
$$1+\frac{1}{n-2}\le w(xyz)\le 3,$$
or $w(xy)=w(yz)=w(xz)=\frac{1}{3}$ and there exists another two vertices $u,v$ such that $\{x,y,z,u,v\}$ induces a copy of $K_5^-$ or $K_5$.
\end{claim}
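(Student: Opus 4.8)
The upper bound is immediate. Since we have assumed that every edge of $G$ lies in at least one triangle, each of $|N(xy)|,|N(xz)|,|N(yz)|$ is at least $1$, so each of the three summands in $w(xyz)$ is at most $1$ and hence $w(xyz)\le 3$. The real content is the lower bound together with the structural alternative, and the plan is to extract both from the $C_3^3$-freeness of $G$ by a system-of-distinct-representatives argument. To that end I would consider the three sets of ``external'' common neighbours
\[
A=N(xy)\setminus\{z\},\qquad B=N(xz)\setminus\{y\},\qquad C=N(yz)\setminus\{x\},
\]
which are subsets of $V(G)\setminus\{x,y,z\}$ with $|A|=|N(xy)|-1$, $|B|=|N(xz)|-1$, and $|C|=|N(yz)|-1$.

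The key observation is that a system of distinct representatives $p\in A$, $q\in B$, $r\in C$ would immediately produce a copy of $C_3^3$: the central triangle $xyz$ together with the pendant triangles $xyp$, $xzq$, $yzr$. The six vertices are pairwise distinct precisely because $p,q,r$ are distinct and each was chosen outside $\{x,y,z\}$ (this is exactly why $z$ is deleted from $A$, $y$ from $B$, and $x$ from $C$). Since $G$ is $C_3^3$-free, Hall's theorem tells us that $\{A,B,C\}$ has no system of distinct representatives, so some subfamily violates Hall's condition, and the argument splits into the three possible types of violation.

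If a single set is empty, say $|N(yz)|=1$, then $w(yz)=1$ while the remaining two summands are each at least $\tfrac{1}{n-2}$, so $w(xyz)>1+\tfrac1{n-2}$. If some pair has union of size at most $1$, say $|A\cup B|\le 1$, then $|N(xy)|,|N(xz)|\le 2$, whence $w(xy),w(xz)\ge\tfrac12$ and $w(xyz)\ge \tfrac12+\tfrac12+\tfrac1{n-2}=1+\tfrac1{n-2}$; in both of these cases the first alternative of the claim holds. The remaining possibility is that all three sets are nonempty, every pair has union at least $2$, yet $|A\cup B\cup C|\le 2$. Then $|A\cup B\cup C|=2$, say $A\cup B\cup C=\{u,v\}$, and each of $A,B,C$ is a nonempty subset of $\{u,v\}$, forcing each of $|N(xy)|,|N(xz)|,|N(yz)|$ to lie in $\{2,3\}$. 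If not all three equal $3$, a direct computation gives $w(xyz)\ge\tfrac13+\tfrac13+\tfrac12=\tfrac76>1+\tfrac1{n-2}$ for $n\ge 22$, so again the first alternative holds. If instead $|N(xy)|=|N(xz)|=|N(yz)|=3$, equivalently $w(xy)=w(xz)=w(yz)=\tfrac13$, then $|A|=|B|=|C|=2$ forces $A=B=C=\{u,v\}$; unwinding the definitions, $u$ and $v$ are each adjacent to all of $x,y,z$, so $\{x,y,z,u,v\}$ spans every edge except possibly $uv$, i.e.\ it induces $K_5^-$ or $K_5$ according to whether $uv\in E(G)$, which is exactly the second alternative.

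The main obstacle is not any single hard estimate but the bookkeeping in the SDR step: one must verify that the three representatives genuinely yield six distinct vertices forming $C_3^3$, and then confirm in the case analysis that the sole configuration escaping the numerical bound $1+\tfrac1{n-2}$ is the fully symmetric one $|N(xy)|=|N(xz)|=|N(yz)|=3$, from which the clique structure on $\{x,y,z,u,v\}$ is forced. I expect the delicate point to be checking that Hall's condition can fail \emph{only} through these enumerated subfamilies, so that no other weight profile slips through unaccounted for.
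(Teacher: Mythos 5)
Your proof is correct. It differs from the paper's in how the case analysis is organized: the paper orders the weights $w(yz)\le w(xz)\le w(xy)$ and cases on the value of the largest one ($1$, $\tfrac12$, or $\le\tfrac13$), in each branch either concluding numerically or greedily selecting vertices $z'\in N(xy)$, $y'\in N(xz)$, $x'\in N(yz)$, chosen to avoid one another and $\{x,y,z\}$, to assemble a copy of $C_3^3$. Your reformulation isolates exactly that mechanism---three distinct external common neighbours, one per edge of the triangle, yield a $C_3^3$---and then lets Hall's theorem enumerate all the ways such a choice can fail: an empty set ($w=1$ for some edge), a pair with union of size at most $1$ (two weights at least $\tfrac12$), or a triple with union of size exactly $2$, whose only non-numerical subcase is $|N(xy)|=|N(xz)|=|N(yz)|=3$ with $A=B=C=\{u,v\}$, giving the $K_5^-$/$K_5$ alternative. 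The concern you raise at the end (whether Hall's condition can fail only through these subfamilies) is not actually delicate: for a family of three sets, Hall's condition is precisely the finite list of constraints you enumerate, so exhaustiveness is automatic. That is exactly what your approach buys over the paper's: the completeness of the case analysis comes for free from Hall, whereas the paper must verify by hand that in each weight regime the greedy choices can be made disjoint (and its subcases, e.g.\ $w(xy)=\tfrac12$, $w(xz)\le\tfrac13$, $w(yz)\le\tfrac14$, correspond to your SDR existing). What the paper's version buys in exchange is self-containedness---no external theorem is invoked---and slightly sharper intermediate bounds (e.g.\ $w(xyz)\ge 1+\tfrac{2}{n-2}$ when some weight equals $1$) of the kind it reuses in the proof of its later claim on average weights. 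Both arguments have the same core; yours is the more systematic packaging.
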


\begin{proof}
Since each edge is contained in at least one triangle, without loss of generality, we have
\[
\frac{1}{n-2}\le w(yz)\le w(xz)\le w(xy)\le 1.
\]
If $w(xy)=1$, then  $w(xyz)\ge 1+\frac{2}{n-2}$ and we are done. 
Next we may assume $w(xy)\le \frac{1}{2}$ and we distinguish two cases based on whether $w(xy)=\frac{1}{2}$ or $w(xy)\le \frac{1}{3}$ .
 
First suppose $w(xy)=\frac{1}{2}$ and let $N(xy)=\{z,z'\}$.
If $w(xz)=\frac{1}{2}$, then $w(xyz)\ge 1+\frac{1}{n-2}$ and we are done. Thus we may assume $w(xz)\le \frac{1}{3}$ and let $y'\in N(xz)-\{y,z'\}$. If $w(yz)\le \frac{1}{4}$, then we can find a vertex $x'\in N(yz)-\{x,y',z'\}$ and $\{x,y,z,x',y',z'\}$ contains a copy of $C_3^3$, a contradiction. 
Hence $w(yz)=w(xz)=\frac{1}{3}$ and $w(xyz)=\frac{7}{6}\ge 1+\frac{1}{n-2}$, inequality holds since $n\ge 22$. 

Now suppose $w(xy)\le \frac{1}{3}$.  
Let $u,v \in N(xy)-\{z\}$. 
If $w(yz)\le \frac{1}{4}$, then there is a vertex $x'\in N(yz)-\{u,v,x\}$. 
Also we can find a vertex $y'\in N(xz)-\{y,x'\}$ and another vertex in $\{u,v\}$ not equal to $y'$ (say $u\not=y'$).
Then $\{y',x',u,x,y,z\}$ contains a copy of $C_3^3$, a contradiction. 
It follows that $w(xz)=w(yz)=w(xy)=\frac{1}{3}$.
Furthermore, if $N(yz)-\{x\}$ or $N(xz)-\{y\}$ is not equal to $\{u,v\}$, then one can check that we still can find a copy of $C_3^3$, a contradiction. Hence $\{x,y,z,u,v\}$ induces a copy of $K_5^-$ or $K_5$.
\end{proof}

\begin{claim}\label{t<e}
$t(G)\le e(G).$
\end{claim}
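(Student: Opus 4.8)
The plan is to prove the inequality by double counting the total weight of all triangles and then invoking Claim~\ref{ weight of triangle}. The starting point is the recollection, made at the beginning of the proof, that every edge of $G$ lies in at least one triangle, so $|N(uv)| \ge 1$ for every edge and all edge weights $w(uv) = 1/|N(uv)|$ are well defined.

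First I would establish the counting identity
\[
\sum_{xyz \in T(G)} w(xyz) = e(G).
\]
This follows by interchanging the order of summation. Expanding each triangle weight $w(xyz) = w(xy)+w(xz)+w(yz)$ into its three edge contributions, a fixed edge $uv$ is counted once for every triangle containing it, and the number of such triangles is exactly $|N(uv)|$. Hence the total contribution of $uv$ is $|N(uv)| \cdot w(uv) = |N(uv)| \cdot \frac{1}{|N(uv)|} = 1$, and summing over all edges gives $e(G)$.

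Second, I would extract from Claim~\ref{ weight of triangle} a uniform lower bound $w(xyz) \ge 1$ valid for \emph{every} triangle. In the first alternative of that claim one has $w(xyz) \ge 1 + \frac{1}{n-2} > 1$, while in the exceptional alternative all three edges have weight $\tfrac13$, so $w(xyz) = 1$; in either case $w(xyz) \ge 1$. Combining this with the identity above yields
\[
e(G) = \sum_{xyz \in T(G)} w(xyz) \ge \sum_{xyz \in T(G)} 1 = t(G),
\]
which is precisely the claimed bound.

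I expect no serious obstacle here: once the weight framework of Claim~\ref{ weight of triangle} is available, the argument is essentially immediate. The only points requiring care are verifying the swap of summation order in the counting identity and checking that the exceptional case of the claim still contributes weight at least one. The reduction that every edge lies in a triangle is what guarantees $|N(uv)| \ge 1$ throughout, so that the sum may be taken over the entire edge set without any degenerate weights.
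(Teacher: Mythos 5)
Your proof is correct and is essentially the same as the paper's: the paper's one-line argument $t(G)=\sum_{xyz\in T(G)}1\le \sum_{xyz\in T(G)}\left(w(xy)+w(xz)+w(yz)\right)=e(G)$ uses precisely your two ingredients, namely the lower bound $w(xyz)\ge 1$ from Claim~\ref{ weight of triangle} (including the exceptional $K_5^-$/$K_5$ case where the weight is exactly $1$) and the double-counting identity that each edge $uv$ contributes $|N(uv)|\cdot\frac{1}{|N(uv)|}=1$ to the total triangle weight. You have merely made explicit the details the paper leaves implicit.
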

\begin{proof} By Claim~\ref{ weight of triangle}, we have
 \[t(G)=\sum_{xyz\in T(G)}1\le \sum_{xyz\in T(G)}\left(w(xz)+w(yz)+w(xy)\right)= e(G),\]
 as required.
 \end{proof}

\begin{claim}\label{no K_5}
For any triangle $xyz$, $w(xyz)\ge 1+\frac{1}{n-2}$, i.e., there is no $K_5^-$ in $G$.
\end{claim}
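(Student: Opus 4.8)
The plan is to argue by contradiction: if Claim~\ref{no K_5} fails then, by the dichotomy in Claim~\ref{ weight of triangle}, some triangle $xyz$ has $w(xy)=w(yz)=w(xz)=\tfrac13$ together with two further vertices $u,v$ such that $\{x,y,z,u,v\}$ spans a $K_5^-$ or a $K_5$. I will fix this configuration and force a copy of $C_3^3$, contradicting the choice of $G$. The starting observation is that the three central edges are saturated inside the cluster: each of $xy,yz,xz$ lies in exactly three triangles, so after deleting the opposite vertex the common neighbourhoods $N(xy),N(yz),N(xz)$ all collapse to the same two-element set $\{u,v\}$. This is precisely why no $C_3^3$ can be read off from the central triangle $xyz$ itself, and it tells me that the extra room must come from the ``spoke'' edges joining $\{u,v\}$ to $\{x,y,z\}$.

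The main step is to eliminate the configuration whenever some spoke edge carries a third triangle. In the $K_5^-$ case, say $uv\notin E(G)$; then $u$ and $x$ have no common neighbour inside the cluster besides $y,z$, so if $|N(ux)|\ge 3$ there is an external vertex $w\in N(ux)\setminus\{y,z\}$. Taking $uxy$ as the central triangle I then choose representatives $w$ for the edge $ux$, $z$ for the edge $uy$, and $v$ for the edge $xy$; since $w\in N(u)$ while $v\notin N(u)$ and $w$ is external, these three vertices are pairwise distinct and distinct from $u,x,y$, so $\{u,x,y,w,z,v\}$ contains a $C_3^3$. The same selection works, mutatis mutandis, around any side triangle, so I may henceforth assume every spoke edge lies in exactly two triangles, both inside the cluster; in the $K_5$ case the analogous computation lets me assume every edge of the $K_5$ lies in exactly three triangles.

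Under this assumption the cluster becomes \emph{triangle-isolated} from the rest of $G$: I will check that no triangle has exactly two vertices in $\{x,y,z,u,v\}$, since every within-cluster edge already has its full complement of triangles inside the cluster, and that any triangle meeting the cluster in a single vertex lives entirely in that vertex's external link. Hence the triangles of $G$ split cleanly into those inside the cluster (at most ten) and those meeting it in at most one vertex. At this point I invoke the maximality of $t(G)$: such an isolated dense cluster is wasteful, and rebuilding on these vertices and comparing against the extremal graph $M_{\lceil n/4\rceil}+M_{\lfloor n/4\rfloor}$ of Theorem~\ref{edge verion}, whose triangle count already exceeds the bound one gets by applying Claim~\ref{t<e} to $G$ minus the cluster (for $n\ge 22$), yields strictly more triangles while staying $C_3^3$-free, the desired contradiction.

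I expect this last maximality and reembedding step to be the main obstacle. The delicate point is that the apex vertices $u,v$ may themselves sit in many triangles through their external links, so one cannot simply delete them and count; the argument must exploit that these external triangles are triangle-disconnected from the cluster core, meaning cluster vertices can never serve as representatives for an external central triangle, in order to bound the total contribution of the cluster and certify that the replacement neither loses triangles unaccountably nor introduces a new $C_3^3$. Establishing the absence of a $K_5^-$ then upgrades Claim~\ref{ weight of triangle} to $w(xyz)\ge 1+\tfrac1{n-2}$ for every triangle, as claimed.
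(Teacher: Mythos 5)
Your opening is correct and coincides with the paper's first step: assuming a set $\{x,y,z,u,v\}$ spanning $K_5$ or $K_5^-$, any external common neighbour $w$ of a cluster edge produces a copy of $C_3^3$ (your choice of representatives is valid), so every cluster edge has its common neighbourhood inside the cluster, and consequently no triangle meets the cluster in exactly two vertices. This is exactly the paper's observation that $N(v_iv_j)\subseteq V(H)$ for every edge $v_iv_j$ of the copy $H$ of $K_5$ or $K_5^-$.

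The final step, however, has a genuine gap --- one you flag yourself but never close, and which cannot be closed as sketched. Your plan is to bound $t(G)$ by the sum of: at most $10$ cluster triangles, the triangles meeting the cluster in exactly one vertex, and $t(G-V(H))\le e(G-V(H))$ (Claim~\ref{t<e} plus Theorem~\ref{edge verion}), and then to beat this sum by the extremal construction. But triangle-isolation gives no control whatsoever on the middle term: triangles with exactly one vertex in the cluster can number $\Theta(n^2)$. Concretely, let $H=K_5^-$ on $\{v_1,\dots,v_5\}$ with $v_4v_5$ missing, and join $v_1$ to a complete bipartite graph $K_{s,t}$, $s,t\approx(n-5)/2$, on the remaining vertices. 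Every edge lies in a triangle, all common neighbourhoods of cluster edges stay in the cluster, and the graph is $C_3^3$-free (any candidate central triangle has at most two available representatives in total), yet it has $st\approx(n-5)^2/4$ triangles meeting the cluster in exactly one vertex and $e(G-V(H))=st$ as well, so your upper bound becomes roughly $(n-5)^2/2$, far above $\frac{n^2}{4}-1$: no contradiction follows. Your proposed rescue --- that ``cluster vertices can never serve as representatives for an external central triangle'' --- is false: a cluster vertex adjacent to two vertices of an external triangle is precisely such a representative, and nothing you proved forbids this. The paper sidesteps all of this by never splitting the triangle count: it applies Claim~\ref{t<e} globally, $t(G)\le e(G)$, and then bounds the number of edges, using the fact that each external vertex has at most one neighbour in $V(H)$ (its neighbourhood in $H$ must be an independent set), except for the set $S$ of external common neighbours of the nonadjacent pair $v_4,v_5$, which have two. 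This gives $e(G)\le 10+(n-5)+|S|+e(G-V(H))$, and a two-case analysis on $|S|$ (if $|S|>n-10$ then $G[S]$ is $P_3$-free) yields $e(G)<\frac{n^2}{4}-1$ in both cases, contradicting extremality via Claim~\ref{t<e}. Your proposal contains neither the set $S$ nor any substitute for this edge-counting; to repair it you would essentially have to reproduce the paper's argument.
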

\begin{proof}
Suppose to the contrary that there is a subgraph $H$ of $G$ isomorphic to $K_5$ or $K_5^-$ induced on the set $\{v_1,v_2,v_3,v_4,v_5\}$.
If $H$ is isomorphic to $K_5^-$, then we may assume without loss of generality $v_4v_5$ is not an edge.

One can check that for any edge $v_iv_j$ in $H$, $N(v_iv_j)\subseteq V(H)$. 
Otherwise we can find a copy of $C_3^3$.
Let $S=(N(v_4)\cap N(v_5))-V(H)$ if $H$ is isomorphic to $K_5^-$ and $S=\emptyset$ otherwise.  

If $|S|\le  n-10$, then $e(G-V(H))\le \frac{(n-5)^2}{4}+\frac{n-5}{2}$ by Theorem~\ref{edge verion}, and we have
\begin{align*}
e(G)\le& ~e(H)+e(G[V(H), V(G)\setminus V(H)])+e(G-V(H))\\
\le&~10+(n-5)+|S|+ \frac{(n-5)^2}{4}+\frac{n-5}{2}\\
< & ~\frac{n^2}{4}-1.
\end{align*}
By Claim~\ref{t<e}, it follows that $G$ is not the extremal graph.

If $|S|> n-10$, then $G[S]$ is $P_3$-free, otherwise together 
with $v_4,v_5$, we can find a copy of $C_3^3$.
Hence $e(G-V(H))\le (n-5-|S|)|S|+|S|+\binom{n-5-|S|}{2}$. When $n\ge 22$, we have
\begin{align*}
e(G)
\le~ & 9+(n-5)+|S|+ (n-5-|S|)|S|+|S|+\binom{n-5-|S|}{2}\\
\le~ & 8n-56\\
<~&\frac{n^2}{4}-1.
\end{align*}
Again by Claim \ref{t<e}, we are done.
\end{proof}

Let $T_1(G)=\{xyz\in T(G): w(xyz)\ge 1+\frac{2}{n}\}$ and $T_2(G)=T(G)-T_1(G)$. 
We have the following bound on the average weight of a triangle in $G$.
\begin{claim}\label{average weight}
The average weight of each triangle in $G$ is at least $1+\frac{2}{n}$.    
\end{claim}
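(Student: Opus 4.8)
The plan is to prove the equivalent inequality $e(G)\ge\left(1+\frac2n\right)t(G)$ and to verify it edge by edge. First note the identity $\sum_{xyz\in T(G)}w(xyz)=e(G)$: each edge $e$ lies in exactly $k_e\coloneqq|N(e)|$ triangles and contributes $k_e\cdot\frac1{k_e}=1$ to the sum, and every edge lies in a triangle. Hence the average weight is exactly $e(G)/t(G)$, so the claim is $e(G)\ge\left(1+\frac2n\right)t(G)$. Since $\sum_e k_e=3t(G)$, this rearranges to the edge-local inequality
\[
\sum_{e\in E(G)}\Bigl(3-\bigl(1+\tfrac2n\bigr)k_e\Bigr)\ge 0.
\]
This form is convenient because the summand is positive precisely for the \emph{light} edges ($k_e\le 2$, contributing $f(1)=2-\frac2n$ or $f(2)=1-\frac4n$) and negative only for the \emph{heavy} edges ($k_e\ge 3$), whose deficit is $\bigl(1+\frac2n\bigr)k_e-3$. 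So the whole problem becomes: the surplus carried by light edges must dominate the deficit of the heavy ones.

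Next I would pin down the local shape of triangles. By Claim~\ref{ weight of triangle} together with Claim~\ref{no K_5} (which forbids $K_5^-$ and $K_5$), revisiting the case analysis of Claim~\ref{ weight of triangle} shows that every triangle has one of only three weight profiles: it contains an edge of weight $1$ (then its weight is $\ge 1+\frac2{n-2}>1+\frac2n$), or it has profile $\left(\frac12,\frac13,\frac13\right)$ of weight $\frac76>1+\frac2n$, or it has profile $\left(\frac12,\frac12,\frac1d\right)$. In particular a triangle in $T_2$ must be of the last type with $d=k_{yz}>\frac n2$: it has exactly one heavy edge $yz$ and two light ``apex'' edges $xy,xz$ (each in exactly two triangles). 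This is the same local picture as in the extremal graph $M_{\lceil n/4\rceil}+M_{\lfloor n/4\rfloor}$, where every matching edge is heavy with $d=\frac n2$, every cross edge is light with $k=2$, and \emph{every} triangle consists of one heavy plus two light edges with weight exactly $1+\frac2n$. A direct count there gives $\sum_e f(k_e)=0$, so the inequality is tight; this both fixes the target constant $1+\frac2n$ and signals that the only way to create a genuine deficit is to make an edge strictly heavier than $\frac n2$.

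The charging scheme I would use assigns the deficit of each heavy edge $yz$ to the surpluses of its light \emph{spokes}, i.e.\ the edges $\{ya,za:a\in N(yz)\}$ that are light. Each light edge splits its surplus $f(k_e)$ equally among the heavy edges it is a spoke of; summing then reduces the global inequality to a per-heavy-edge check. The structural engine is $C_3^3$-freeness, which forbids three vertex-disjoint ears attached to the three edges of a triangle. I would use it twice: first to show that for a heavy edge $yz$ almost all of its $2d$ spokes are light (too many heavy spokes at a common vertex, together with $yz$, complete a copy of $C_3^3$), and second to bound the multiplicity with which a single light edge is reused. The tight extremal computation shows exactly what is needed: there each triangle has one heavy and two light edges, each light spoke serves precisely two heavy edges, and the $n$ spokes of a heavy edge each donate $\tfrac12\bigl(1-\frac4n\bigr)$, summing to its deficit $\frac n2-2$.

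The main obstacle is controlling these multiplicities away from the extremal configuration. A light edge $uv$ lies in two triangles, and in each triangle \emph{two} of the other edges could a priori be heavy, so the naive multiplicity bound is $4$ rather than $2$ — and a multiplicity of $4$ would not close the accounting. The resolution must be that triangles with two heavy edges are exactly the surplus-rich $\left(\frac12,\frac13,\frac13\right)$ type (and edge-of-weight-$1$ types), which carry extra room that I would feed back into the charging. Equally delicate is the two-regime split: heavy edges with $d$ just above $\frac n2$ have negligible deficit $\frac2n-\frac1d$ and are easy, while edges with $d$ close to $n$ are nearly universal and force a rigid, almost-complete-bipartite/book-like neighbourhood that must be analysed separately to exhibit enough light spokes. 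I expect establishing the clean, bounded-multiplicity correspondence between heavy-edge deficits and light-edge surpluses, driven by $C_3^3$-freeness, to be the hard technical core.
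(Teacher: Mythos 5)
Your reformulation is sound: since every edge of $G$ lies in a triangle, $\sum_{xyz\in T(G)}w(xyz)=e(G)$, so the claim is indeed equivalent to $\sum_{e\in E(G)}\left(3-\left(1+\tfrac2n\right)|N(e)|\right)\ge 0$; your profile analysis of $T_2$-triangles (one heavy edge with $|N(e)|>\tfrac n2$, two light edges each in exactly two triangles) is correct, as is the tightness computation in $M_{\lceil n/4\rceil}+M_{\lfloor n/4\rfloor}$. But the proposal stops exactly where the proof has to start: the charging scheme from heavy-edge deficits to light-spoke surpluses is never carried out. You yourself identify the fatal obstruction --- a light edge can a priori serve as a spoke of up to four heavy edges, and multiplicity $4$ does not close the accounting --- and then state only that the resolution ``must be'' that two-heavy-edge triangles are surplus-rich, and that you ``expect'' the bounded-multiplicity correspondence to be the hard technical core. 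That correspondence \emph{is} the content of the claim; nothing in the proposal shows how $C_3^3$-freeness actually bounds the reuse of a light spoke, nor how the surplus of $\left(\frac12,\frac13,\frac13\right)$-triangles or weight-$1$ edges would be routed back into the scheme. As written, this is an outline with its central step unproven.

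The missing idea is a local structural fact that makes any such global bookkeeping unnecessary. For a $T_2$-triangle $xyz$ with light edges $xy,xz$, one can show (using $C_3^3$-freeness together with Claim~\ref{no K_5}) that the second triangle through $xy$ and the second triangle through $xz$ have the \emph{same} apex $x'$; hence $\{x,x',y,z\}$ induces a $K_4$, the triangle $x'yz$ is also in $T_2(G)$, and the triangles of $T_2(G)$ pair up inside such $K_4$'s. The remaining two triangles $xx'y,xx'z$ of the $K_4$ lie in $T_1(G)$, because $N(xx')\cap N(yz)=\emptyset$ forces $|N(xx')|<\tfrac n2$; moreover, by Claim~\ref{no K_5} each triangle lies in at most one $K_4$, so these $T_1$-triangles are never claimed twice. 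The four triangles of each $K_4$ then have total weight $4+\frac{2}{|N(yz)|}+\frac{2}{|N(xx')|}\ge 4+\frac{8}{|N(yz)|+|N(xx')|}\ge 4+\frac{8}{n}$, which yields the average bound directly, with all charging confined to groups of four triangles. Without this $K_4$-pairing, or an equally precise substitute that pins down your spoke multiplicities, the proposed edge-based charging cannot be completed as described.
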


\begin{proof}
 If $T_2(G)$ is empty, then there is nothing to prove.  Hence we may assume $T_2(G)\not=\emptyset$.  

Let $xyz$ be a triangle in $T_2(G)$ with $w(xy)\ge w(xz)\ge w(yz)$. 
By Claim~\ref{ weight of triangle} and~\ref{no K_5}, we have $w(xy)\in \{1, \frac{1}{2}\}$. If $w(xy)=1$, then 
$$w(xyz)\ge 1+\frac{2}{n-2}$$ which means $xyz\in T_1(G)$, a contradiction. So $w(xy)=\frac{1}{2}$. Let $N(xy)=\{z,~x'\}$. 
Suppose $N(xz)-\{y,x'\}\not=\emptyset$ and $y'\in N(xz)-\{y,x'\}$.
Then either $N(yz)-\{x,~x',~y'\}\not=\emptyset$ and we can find a copy of $C_3^3$, or $N(yz)=\{x,~x',~z'\}$ which means $\{x,y,z,x',y'\}$ contains a copy of $K_5^-$, or 
$w(yz)\ge \frac{1}{2}$ which means $w(xyz)=\frac{3}{2}\ge 1+\frac{2}{n}$. In all of these cases, we get a contradiction. Hence, $N(xz)=\{y,~x'\}$ and $w(xy)=w(xz)=\frac{1}{2}$ and so $w(yz)\le \frac{2}{n}$.
For the edges $x'y, ~x'z$, we  deduce that $w(x'y)=w(x'z)=\frac{1}{2}$.  If not, suppose $w(x'y)<\frac{1}{2}$. Let $u$ be in $N(x'y)-\{x,z\}$. Since $|N(yz)|>\frac{n}{2}$, let $v$ be in $N(yz)-\{x,x',u\}$. Then  $\{u,v, x',x,y,z\}$ contains a copy of $C_3^3$, a contradiction. It follows that the triangle $x'yz$ is also in $T_2(G)$.

Therefore, for any triangle $xyz$ in $T_2(G)$, there is a unique triangle $x'yz$ in $T_2(G)$ such that  $\{x,x',y,z\}$ induces a copy of $K_4$ and $w(xy)=w(xz)=w(x'y)=w(x'z)=\frac{1}{2}$.
Hence, we can partition the set of triangles in $T_2(G)$ into pairs $(xyz,~x'yz)$. For each such pair, we define a mapping
\[\phi (xyz,~x'yz)=\{xyz,~x'yz,~xx'y,~xx'z\}.\]
Note that since $w(x'z)=\frac{1}{2}$ and $N(x'z)=\{x,y\}$, then $N(xx')\cap N(yz)=\emptyset$ and $|N(xx')|< n-|N(yz)|\le \frac{n}{2}$. 
This means $xx'y, xx'z$  are in  $T_1(G)$. Furthermore, by Claim \ref{no K_5}, each triangle is contained in at most one copy of $K_4$, so $xx'y, xx'z$ do not belong to any other $\phi (uvw,~u'vw)$. Since
\begin{align*}
&w(xyz)+w(x'yz)+w(xx'y)+w(xx'z)\\
&=4+\frac{2}{|N(yz)|}+\frac{2}{N(xx')}\\
& \ge 4+\frac{8}{|N(xx')|+|N(yz)|}\ge 4+\frac{8}{n},
\end{align*}
we can transfer the weight of $xx'y, ~xx'z$ to $xyz,~x'yz$ and ensure  the average weight is at least $1+\frac{2}{n}$.
\end{proof}

Now by the Claim \ref{average weight} and  Theorem \ref{edge verion}, we have 
\begin{align*}
t(G)&=\sum_{xyz\in T(G)}1\le \frac{n}{n+2}\sum_{xyz\in T(G)}1+\frac{2}{n}\\
&\le \frac{n}{n+2}\sum_{xyz\in T(G)}\left(w(xz)+w(yz)+w(xy)\right)\\
&\le \frac{n}{n+2}e(G)\le \frac{n^2}{4}-1+\mathbbm{1}_{4|n}.
\end{align*}
Equality holds if and only if $e(G)$ attains the maximum and the average weight of each triangle is exactly $1+\frac{2}{n}$.
Hence, by the characterization of the extremal graphs for $\ex(n,C_3^3)$ in Theorem~\ref{edge verion}, we have  $G=M_{\lceil\frac{n}{4}\rceil}+M_{\lfloor\frac{n}{4}\rfloor}$ when $n$ is even. $\hfill\blacksquare$

\section{Proof of Theorem \ref{Path}}\label{ProofP}
 Let $t(u,v)$ denote the number of triangles containing $u$ or $v$ or both. First, we use a technique to reduce the proof of Theorem \ref{Path} to the case that each vertex is contained in many triangles. To this end we use the following lemma.
\begin{lemma} \label{reduce}
Suppose $G$ is a  $P_3^3$-free graph on at least $300$ vertices. If for any two different vertices $u, v$, we have $t(u), t(v)\ge \frac{n}{2}-1$ and $t(u,v)\ge n-2$, then $t(G)\le \left\lfloor\frac{(n-1)^2}{4}\right\rfloor$ and equality holds if and only if $G=K_1+K_{\floor{\frac{n-1}{2}},\ceil{\frac{n-1}{2}}}$.
\end{lemma}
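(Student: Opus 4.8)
The plan is to reduce the bound to Mantel's theorem by showing that, under the hypotheses, every triangle of $G$ passes through one common vertex. Concretely, I would prove the structural claim that there exists a vertex $w$ contained in all triangles of $G$. Granting this, each triangle has the form $\{w,x,y\}$ with $xy$ an edge of $G[N(w)]$, so $t(G)=e(G[N(w)])$; moreover $G[N(w)]$ must be triangle-free, since a triangle inside $N(w)$ would be a triangle avoiding $w$. Mantel's theorem then gives $t(G)=e(G[N(w)])\le \lfloor |N(w)|^2/4\rfloor\le \lfloor (n-1)^2/4\rfloor$, and equality forces $N(w)=V(G)\setminus\{w\}$ together with $G[N(w)]$ balanced complete bipartite, i.e.\ $G=K_1+K_{\lfloor (n-1)/2\rfloor,\lceil (n-1)/2\rceil}$. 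Throughout I may assume every edge lies in a triangle, since deleting triangle-free edges changes neither $t(G)$, nor $P_3^3$-freeness, nor the triangle-degree hypotheses.

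The engine is a local reformulation of $P_3^3$-freeness. A copy of $P_3^3$ is exactly a \emph{middle} triangle $T_2=\{u,v,b\}$ on an edge $uv$, together with a triangle $T_1\ni u$ and a triangle $T_3\ni v$ that share only the indicated single vertices and are otherwise vertex-disjoint. Thus $P_3^3$-freeness says: for every edge $uv$ lying in a triangle $\{u,v,b\}$, one cannot choose a triangle through $u$ and a triangle through $v$ which, after deleting $u,v,b$, are vertex-disjoint. Since $t(u),t(v)\ge \frac n2-1$, equivalently $G[N(u)]$ and $G[N(v)]$ each have at least $\frac n2-1$ edges, a greedy disjointness argument will produce such a disjoint pair unless these two edge sets are forced to overlap on a very small vertex set. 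This is the mechanism that pins all triangles to a common vertex.

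To carry this out I would take $w$ of maximum triangle-degree $t(w)=e(G[N(w)])$ and argue in two steps. First, $G[N(w)]$ is triangle-free: a triangle $xyz\subseteq N(w)$ forms a $K_4$ with $w$, and using an edge of this $K_4$ as the middle edge together with the abundance of triangles through $w$ guaranteed by $t(w)\ge \frac n2-1$ locates a $P_3^3$ (this is the same phenomenon by which adding an edge inside a part of $K_1+K_{a,b}$ already creates a $P_3^3$). Second, no triangle avoids $w$: given a triangle $T$ with $w\notin T$ and a vertex $x\in T$ adjacent to $w$, the edge $wx$ lies in a triangle $\{w,x,p\}$, and I would take $T_1=T$, middle triangle $\{w,x,p\}$, and a triangle $\{w,u,u'\}$ through $w$ avoiding the constant-size set $\{x,y,z,p\}$ as $T_3$. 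The pair condition $t(u,v)\ge n-2$ enters precisely here: it bounds how many triangles a single edge can absorb, preventing the triangles through $w$ from all being cornered onto the few vertices we must avoid, and so letting us extract the required $T_3$.

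The main obstacle is exactly this second step. The triangles through $w$ are the edges of $G[N(w)]$, and to find $\{w,u,u'\}$ disjoint from a prescribed constant-size set one must rule out that a few vertices form a vertex cover of $G[N(w)]$ absorbing all its edges; one must also treat the case where $T$ lies entirely outside $N(w)$, connecting instead through richness at the vertices of $T$. Overcoming this needs the lower bounds $t(\cdot)\ge \frac n2-1$ at several vertices combined with the book-size upper bound from $t(\cdot,\cdot)\ge n-2$, and uses $n\ge 300$ so that the linear slack dominates the constant-size avoided sets; a case analysis according to whether the link is covered by a bounded set of high-degree vertices appears unavoidable. Once the structural claim holds, Mantel's theorem gives the inequality, and uniqueness follows from the equality case of Mantel together with the tightness of $t(u,v)=n-2$ on bipartite edges, which forces $w$ to be dominating and its link to be balanced complete bipartite.
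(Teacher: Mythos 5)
Your proposal has a genuine gap: the entire content of the lemma is concentrated in your structural claim that some vertex $w$ lies in \emph{every} triangle of $G$, and you do not prove it --- you yourself flag the two decisive steps as unresolved (``the main obstacle is exactly this second step,'' ``a case analysis \dots appears unavoidable''). Both flagged obstacles are real. For your first step (the link $G[N(w)]$ is triangle-free) and for extracting the triangle $\{w,u,u'\}$ in your second step, you need an edge of $G[N(w)]$ avoiding a prescribed set of $3$ or $4$ vertices; but $e(G[N(w)])\ge \frac{n}{2}-1$ does not give this, since that many edges can easily be covered by three or four vertices (a union of a few books through $w$), and you offer no mechanism to exclude that --- the hypothesis $t(u,v)\ge n-2$ is a \emph{lower} bound on triangle counts and does not by itself cap how many triangles sit on a single edge. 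More seriously, the case of a triangle $T$ vertex-disjoint from every triangle through $w$ --- i.e.\ the presence of two vertex-disjoint triangles in $G$ --- is exactly the hard configuration of the whole problem, and ``richness at the vertices of $T$'' is not an argument. Note also that your claim, if true, implies $G$ has no two vertex-disjoint triangles at all, so any proof of it must subsume that case.

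For comparison, the paper does not attempt any such direct structural classification. It decomposes $G$ into components and splits into two cases. If no component contains two vertex-disjoint triangles, it invokes as a black box the theorem of Yuan and Yang that $\ex(m,K_3,M_2^3)=\max\bigl\{3m-8,\bigl\lfloor\frac{(m-1)^2}{4}\bigr\rfloor\bigr\}$ with known extremal graphs, and sums over components; this is what replaces your unproven apex claim (and it is genuinely nontrivial: graphs such as $K_3+\overline{K}_{m-3}$ show that pairwise-intersecting triangles alone force no common vertex, so the degree hypotheses must be exploited, which is what that theorem's proof does). If some component does contain two vertex-disjoint triangles, the paper runs a minimum-distance argument: the two triangles can be taken joined by an edge, an independent set $S$ of common neighbors of $y_2,z_2$ is extracted, every edge from $y_2$ or $z_2$ into $S$ is shown to lie in a unique triangle, so deleting $y_2,z_2$ destroys at most $|S|+9$ triangles; the hypothesis $t(y_2,z_2)\ge n-2$ then forces $|S|\ge n-11$, which caps $t(G)$ at $(n-11)\binom{11}{2}+\binom{11}{3}<\bigl\lfloor\frac{(n-1)^2}{4}\bigr\rfloor$. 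Your plan contains no substitute for either ingredient, so as it stands it is a program, not a proof.
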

\noindent First we will show how to deduce Theorem~\ref{Path} from Lemma~\ref{reduce}, then we will prove Lemma~\ref{reduce}.

\subsection{Proof of Theorem~\ref{Path} using Lemma~\ref{reduce}.}
Let $G$ be a $P_3^3$-free graph on $n$ vertices with $n\ge 300^3$ and $t(G)\ge \left\lfloor\frac{(n-1)^2}{4}\right\rfloor$. 
We initialize $G_n=G$ and define a process of as follows: for $j<n$, let $G_j=G_{j+1}-v_1$ if $t(v_1)<\frac{j+1}{2}-1$ in $G_{j+1}$,
or $G_{j-1}=G_{j+1}-\{v_1,v_2\}$ if $t(v_1,v_2)<(j+1)-2$ in $G_{j+1}$. Suppose the process ends at $G_\ell$ and for any two  vertices $u,v$ in $G_\ell$, we have  $t(u),t(v)\ge \frac{\ell}{2}-1$ and $t(u,v)\ge \ell-2$. Note that 
\[\binom{\ell}{3}\ge t(G_\ell)\ge \left\lfloor\frac{(\ell-1)^2}{4}\right\rfloor+\frac{n-\ell}{2}\]
Hence $\ell\ge \sqrt[3]{3n}\ge 300$ and by Lemma 1,  $G_\ell$ contains a copy of $P_3^3$, a contradiction.

That is to say, $G_n$ satisfies the conditions in Lemma 1 and we are done. $\hfill\blacksquare$

\subsection{Proof of Lemma 1.}
Let $G=G_1\cup \cdots \cup G_c$ be a  $P_3^3$-free graph on $n\ge 300$ vertices, where $G_i$ are the connected components of $G$, for $1\le i\le c$.  We may assume each edge of $G$ is contained in at least one triangle, otherwise we delete it and the conditions still hold in the resulting graph.
For any two distinct vertices $u, v$, we have $t(u), t(v)\ge \frac{n}{2}-1$ and $t(u,v)\ge n-2$.
It follows that $v(G_i)\ge \delta(G)\ge \sqrt{n}$.  

As mentioned in the introduction, Yuan and Yang~\cite{Yuanxiaoli} determined $\ex(n,K_3,M_2^3)$ for all $n$.
\begin{theorem}(Yuan and Yang \cite{Yuanxiaoli})\label{2K_3}
For $n\ge 7$, we have
\[\ex(n,K_3,M_2^3)=\max\left\{3n-8, \left\lfloor\frac{(n-1)^2}{4}\right\rfloor\right\}.\]
Furthermore, $K_3+\overline{K}_{n-3}$ or $K_1+K_{\floor{\frac{n-1}{2}},\ceil{\frac{n-1}{2}}}$ is the unique extremal graph.
\end{theorem}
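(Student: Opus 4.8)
The plan is to prove matching lower and upper bounds and then analyze equality. For the lower bound I would exhibit the two graphs and check both are $M_2^3$-free, noting that $M_2^3=2K_3$, so that being $M_2^3$-free means having no two vertex-disjoint triangles. In $K_3+\overline{K}_{n-3}$ every triangle uses at least two of the three core vertices, so two disjoint triangles would need at least four core vertices; counting the base triangle together with the $3(n-3)$ triangles formed by a core edge and an outer vertex gives exactly $3n-8$. In $K_1+K_{\lfloor(n-1)/2\rfloor,\lceil(n-1)/2\rceil}$ every triangle passes through the apex (the remainder is bipartite, hence triangle-free), so no two are disjoint, and the triangles are in bijection with the $\lfloor(n-1)^2/4\rfloor$ edges of the bipartite part. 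This yields $\ex(n,K_3,M_2^3)\ge\max\{3n-8,\lfloor(n-1)^2/4\rfloor\}$.

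For the upper bound, let $G$ be an $n$-vertex $2K_3$-free graph maximizing the number of triangles, and delete any edge lying in no triangle. The family $\mathcal{T}$ of triangles of $G$ is then an intersecting family of $3$-sets, and the argument splits on whether $\mathcal{T}$ has a common vertex. If some vertex $v$ lies in every triangle, then $G-v$ is triangle-free, so the triangles of $G$ correspond to the edges of $G[N(v)]$, a triangle-free graph on at most $n-1$ vertices. Mantel's theorem gives $t(G)\le\lfloor(n-1)^2/4\rfloor$, with equality forcing $G[N(v)]$ to be a balanced complete bipartite graph spanning $V(G)-v$, that is, $G=K_1+K_{\lfloor(n-1)/2\rfloor,\lceil(n-1)/2\rceil}$.

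The substantive case is when no single vertex meets all triangles. Here the strategy is to show that the intersecting family $\mathcal{T}$ still admits a bounded ``core''. I would first extract a minimal subfamily of triangles with empty common intersection; using pairwise intersection together with $|T\cap T'|\le 2$, this subfamily consists of two or three triangles in one of a few explicit configurations, each of which exposes a fixed triple $K=\{x,y,z\}$ that itself spans a triangle. The goal is then to prove that, apart from boundedly many exceptional graphs, every triangle of $G$ contains at least two vertices of $K$. Granting this, each triangle other than $xyz$ contains exactly one pair from $K$ together with an outer vertex, so $t(G)\le 1+\sum_{\{a,b\}\subseteq K}|N(a)\cap N(b)\setminus K|\le 1+3(n-3)=3n-8$; chasing equality (each outer vertex adjacent to all of $x,y,z$, and the outer set independent, since any outer edge would create a triangle meeting $K$ in a single vertex) pins down $G=K_3+\overline{K}_{n-3}$.

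The main obstacle is precisely the structural claim in this last case: once all triangles pairwise meet but none is universal, one must force the ``two vertices of $K$'' property up to finitely many configurations. The delicate point is that a triangle may legitimately meet $K$ in only one vertex provided it contains one of the auxiliary apex vertices of the core configuration, and one has to invoke $2K_3$-freeness repeatedly to show such triangles either cannot coexist with the rest or are confined to a constant-size vertex set; the residual sporadic graphs (subgraphs of $K_5$, of $K_1+C_5$, and the like) carry only $O(1)$ triangles, hence at most $\max\{3n-8,\lfloor(n-1)^2/4\rfloor\}$ once $n\ge 7$. Combining the two cases with their equality analyses then gives the stated formula together with the two extremal graphs.
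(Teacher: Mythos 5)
You should first note that the paper contains no proof of Theorem~\ref{2K_3} to compare against: it is imported verbatim from Yuan and Yang~\cite{Yuanxiaoli} and used as a black box in the proof of Lemma~\ref{reduce}. So your attempt must stand on its own, and on its own it is incomplete. The parts you do carry out are correct: the two lower-bound constructions and their counts ($3n-8$ and $\lfloor(n-1)^2/4\rfloor$) are right, and the centered case is a complete argument — if every triangle contains a fixed vertex $v$, then $G[N(v)]$ is triangle-free, Mantel gives $t(G)\le\lfloor(n-1)^2/4\rfloor$, and the equality analysis forcing $G=K_1+K_{\floor{\frac{n-1}{2}},\ceil{\frac{n-1}{2}}}$ goes through (modulo one small point: you delete edges in no triangle at the outset, and for uniqueness you should observe that no such edge can be re-inserted, since every non-edge of either extremal graph lies in a triangle once added). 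Your extraction of a core triple is also sound: a minimal subfamily with empty intersection yields a transversal triple $K=\{u,v,w\}$ with each pair inside a common triangle, so $K$ spans a triangle of $G$.

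The genuine gap is exactly where you place it, and it is worse than you suggest: the dichotomy you propose — every triangle meets $K$ in at least two vertices, or else the triangles are confined to a constant-size vertex set — is false as stated. A Hilton--Milner-type configuration defeats it: take a triangle $b_1b_2b_3$, an apex $x$ adjacent to $b_1,b_2,b_3$, and arbitrarily many further vertices $w$, each joined to $x$ and to exactly one $b_i$ (with the $w$'s independent). This graph is $2K_3$-free, its triangle family has no common vertex, its support is unbounded, and whichever transversal triple $K$ your minimal subfamily produces (say $K=\{x,b_1,b_2\}$), the triangles $xb_3w$ meet $K$ in a single vertex. Such graphs do satisfy the theorem — they carry only about $n$ triangles — but that requires its own counting argument, not an appeal to ``$O(1)$ triangles,'' and more generally one must classify \emph{all} intersecting, non-centered triangle families realizable in a $2K_3$-free graph and verify the bound, with strict inequality for uniqueness, in every branch. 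That case analysis is the entire content of the hard direction, and your proposal defers it with ``invoke $2K_3$-freeness repeatedly.'' Two further small slips: $K_1+C_5$ has all its triangles through the hub, so it belongs to your centered case rather than the sporadic list; and your claimed final bound $1+3(n-3)=3n-8$ is only valid after the structural claim is established, so the equality analysis pinning down $K_3+\overline{K}_{n-3}$ is likewise conditional. As it stands, the decisive step of the upper bound is an announced goal, not a proof.
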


\noindent  If no $G_i$ contains two vertex-disjoint triangles, then since $v(G_i)\ge \sqrt{n}\ge \sqrt{300} $, we have $t(G_i)\le \left\lfloor\frac{(v(G_i)-1)^2}{4}\right\rfloor$
by Theorem~\ref{2K_3} and 
\[t(G)= \sum_{i=1}^c t(G_i)\le  \left\lfloor\frac{(n-1)^2}{4}\right\rfloor.\]
Equality holds if and only if $G$ is connected and $G=K_1+K_{\floor{\frac{n-1}{2}},\ceil{\frac{n-1}{2}}}$.

Therefore, we may assume without loss of generality that $G_1$ contains two vertex-disjoint triangles.
We define the distance between two vertex-disjoint triangles as the minimum length of a path with endvertices in the respective triangles.
Among all vertex-disjoint triangle pairs in $G_1$, let $x_1y_1z_1,~x_2y_2z_2$ be two vertex~disjoint triangles whose distance is the smallest and let $P=x_1\cdots y_2$ be a path of minimal length between them.
First suppose the length of $P$ is at least $2$. 
Let $x_1^+$ be the vertex adjacent to $x_1$ on the path $P$ and let $x_1x_1^+w$ be a triangle containing the edge $x_1x_1^+$.
Then we either find a copy of $P_3^3$ if $w\in \{x_2,~y_2,~z_2\}$, or we find another two vertex disjoint triangles whose distance is smaller, and in both cases we obtain a contradiction. 
Hence we have that $P=x_1y_2$ is a single edge. 

Note that $x_1y_2$ is also contained in a triangle and the third vertex of this triangle must be in $
\{y_1,z_1,x_2,z_2\}$. Without loss of generality, say $x_1y_2z_2$ is a triangle.
Let $S=(N(y_2)\cap N(z_2))-\{x_1,y_1,z_1\}$. Obviously, we have that
$S$ is nonempty  and  independent, since $x_2\in S$ and $G$ contains no copy of $P_3^3$. 

Suppose $u$ is a vertex in $N(y_2)-(S\cup \{x_1,y_1,z_1,z_2\})$ and $uy_2w$ is a triangle containing the edge $uy_2$. If $w$ does not belong to $\{x_1,y_1,z_1\}$, then $y_1z_1x_1,~x_1z_2y_2,~y_2uw$ forms a copy of $P_3^T$. If $w\in \{x_1,y_1,z_1\}$, then $y_1z_1x_1,~wuy_2,~y_2z_2x_2$ forms a copy of $P_3^T$. In both cases we have a contradiction.  It follows that $N(y_2)\subset S\cup \{x_1,y_1,z_1, z_2\}$ and analogously  $N(z_2)\subset S\cup \{x_1,y_1,z_1, y_2\}$.

Since $\delta(G)\ge \sqrt{n}$, then $|S|\ge 2$.
We may assume that for each edge of the form $y_2u$ and $z_2u$ with $u\in S$, is only contained in the triangle $y_2uz_2$. 
If not, suppose $y_2uw$ is a new triangle distinct from  $y_2uz_2$. Since  $S$ is independent, we have $w\in \{x_1,y_1,z_1\}$. Let $u'\in S-\{u\}$, then $y_1z_1x_1,~wuy_2,~y_2u'z_2$ forms a  copy of $P_3^3$, a contradiction. Therefore, if we delete the two vertices $y_2,~z_2$ we destroy at most $|S|+9$ triangles.
By the condition $t(u)+t(v)\ge n-2$, we have $|S|\ge n-11$. We obtain that the total number of triangles is at most 
\[(n-11)\binom{11}{2}+\binom{11}{3}< \left\lfloor\frac{(n-1)^2}{4}\right\rfloor,\]
where the equality holds because $n\ge 300$. It follows that $G$ is not the extremal graph, and we are done.$\hfill\blacksquare$

\section{Concluding remarks} \label{concl}

In this paper, we studied the generalized Tur\'an number of edge blow-ups of the graphs $C_3^3$ and $P_3^3$.
It would be interesting to consider the general case of $C_k^3$ and $P_k^3$.
In this section, we pose two conjectures about the generalized extremal numbers of these graphs.

Let $H(n,p, t)$ denote the graph $K_{t-1}+T_p(n-t+1)$, where $T_p(n-t+1)$ is the balanced $p$-partite complete graph on $n-t+1$ vertices, i.e., the Tur\'an graph.  Let $H^+(n,p,t)$ be the graph obtained from $H(n,p,t)$ by adding an extra edge in any class of  $T_p(n-t+1)$.

Based on the our results and the Tur\'an number of $\ex(n,C_k^3)$ and $\ex(n,P_k^3)$, we pose the following conjecture for the generalized Tur\'an number.
 
 \begin{conjecture}
When $k\ge 4$ and $n$ is sufficiently large, $H(n,2,\lfloor\frac{k-1}{2}\rfloor+1)$ is the unique extremal graph for both $\ex(n,K_3,C_k^3)$ and $\ex(n,K_3,P_k^3)$ when $k$ is odd, and $H^+(n,2,\lfloor\frac{k-1}{2}\rfloor+1)$ is the unique extremal graph when $k$ is even.
 \end{conjecture}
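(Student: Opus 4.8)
Write $s=\lfloor\frac{k-1}{2}\rfloor$, so the conjectured core is $K_s$. The first step is to show the claimed graphs are free of both blow-ups and to count their triangles. The clean tool is a triangle-transversal bound: if a graph $F$ has a set $W$ with $F-W$ triangle-free, then every copy of $P_k^3$ (resp.\ $C_k^3$) inside $F$ meets $W$ in a triangle transversal of that copy, and the minimum triangle transversal of $P_k^3$ and of $C_k^3$ is $\lceil k/2\rceil=s+1$ (take every other spine vertex; each vertex covers at most two of the $k$ triangles). Hence any graph admitting a triangle transversal of size at most $s$ is simultaneously $P_k^3$- and $C_k^3$-free, which disposes of $H(n,2,s+1)=K_s+T_2(n-s)$, whose core $K_s$ is exactly such a transversal. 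For even $k$ the graph $H^+(n,2,s+1)$ has transversal number $s+1$, so a finer argument is needed: every triangle of $H^+$ outside the core runs through the single added edge $ab$, and since two blow-up triangles share at most one vertex, they cannot both use $\{a,b\}$; thus at most one triangle of an embedded $P_k^3$ avoids the core, forcing at least $s+1$ core-triangles into a path where the core meets at most $s$ of them, a contradiction. A direct count then yields $t(H)=s\lfloor\frac{(n-s)^2}{4}\rfloor+\binom{s}{2}(n-s)+\binom{s}{3}$ and the analogous value for $H^+$, the dominant term being $s\lfloor\frac{(n-s)^2}{4}\rfloor$ from triangles using one core vertex and one vertex in each side of the balanced bipartition.

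\textbf{Upper bound: cleaning.} Let $G$ be an extremal $P_k^3$-free (resp.\ $C_k^3$-free) graph. The first move is a progressive-induction cleaning modeled on Lemma~\ref{reduce}: repeatedly delete a vertex lying in fewer than $c\,n$ triangles for a suitable $c=c(k)$, which alters the target only in lower-order terms, reducing to the regime where every vertex lies in $\Omega(n^2)$ triangles. In this dense regime large triangle-degrees are available, which is what lets one locate the core.

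\textbf{Core extraction (the crux).} The heart is to show that a dense extremal $G$ admits a triangle transversal $W$ with $\abs{W}\le s$ and $G-W$ triangle-free (odd $k$), or $\abs{W}\le s$ with $G-W$ a triangle-free graph plus a single edge (even $k$). I would obtain this by bounding the longest \emph{triangle-path} in $G$, namely a sequence of triangles in which consecutive ones share exactly one vertex and are otherwise disjoint; such a configuration is precisely a copy of $P_\ell^3$, so $P_k^3$-freeness caps its length at $k-1$. Building a maximal triangle-path greedily and analysing how the remaining triangles attach to it should produce a transversal of bounded size, and optimizing that size against the freeness threshold $s+1$ pins it to $s$. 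Given the transversal, one estimates $t(G)\le\sum_{w\in W}e(G[N(w)])$ and bounds each $e(G[N(w)])$ using that $N(w)\setminus W$ induces a triangle-free graph, hence at most $\lfloor n^2/4\rfloor$ edges, the contribution from $N(w)\cap W$ being $O(sn)$; summing over the $s$ core vertices and optimizing the joint structure of the $s$ neighborhoods recovers $H$ (resp.\ $H^+$) and its exact count. For $C_k^3$ one can additionally feed in the known value of $\ex(n,C_k^3)$ (Theorem~\ref{edge verion} for $k=3$, and its extensions for larger $k$) to control the edge count of the triangle-free remainder.

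\textbf{Main obstacle.} The difficulty lies in the exact count and uniqueness rather than the asymptotics. Forcing the transversal to have size exactly $s$ (not merely $s+O(1)$), controlling the overlaps among the $s$ neighborhoods so that the optimum is attained only by the \emph{balanced} complete bipartite remainder, and—for even $k$—showing that precisely one surplus edge is gained and only inside a single side, are all delicate, since many near-extremal configurations compete within lower-order terms. What is really required is a robust stability statement, to the effect that any $G$ within $o(n^2)$ triangles of the optimum is structurally close to $H$ or $H^+$, followed by a local-exchange refinement to pin down the last edges; making this uniform in $k$ is exactly the gap that keeps the statement a conjecture.
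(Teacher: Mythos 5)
The statement you are proving is posed in the paper as a \emph{conjecture}: the paper offers no proof of it, so there is nothing to match your argument against, and your proposal must be judged on its own. Its first half is sound: the transversal computation (minimum triangle transversal of $P_k^3$ and $C_k^3$ equals $\lceil k/2\rceil=s+1$, since each spine vertex covers at most two of the $k$ blow-up triangles), the resulting freeness of $H(n,2,s+1)$, the finer argument for $H^+$ when $k$ is even (at most one triangle of an embedded copy can use the unique added edge, and the $s$ core vertices cover at most $2s=k-2<k-1$ of the remaining triangles), and the count $t(H)=s\lfloor(n-s)^2/4\rfloor+\binom{s}{2}(n-s)+\binom{s}{3}$ are all correct. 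This establishes the lower bound and is essentially the computation that motivates the conjecture.

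The upper bound, however, contains a genuine gap at precisely the step you label the crux, and the gap is not only the exactness/uniqueness issue you flag at the end. Your core-extraction plan rests on the claim that bounding the longest triangle-path forces a triangle transversal of bounded size, but this implication is false: a disjoint union of $m$ triangles has longest triangle-path of length $1$ (it is even $P_2^3$-free, since no two of its triangles share a vertex), yet its triangle transversal number is $m$. So $P_k^3$-freeness alone bounds nothing, and a maximal triangle-path with an analysis of how other triangles ``attach'' to it cannot produce the set $W$ --- scattered and matching-like triangle configurations are invisible to it. This is exactly the obstruction the paper must fight even in the case $k=3$: the proof of Lemma~\ref{reduce} does not extract a transversal abstractly but first invokes Theorem~\ref{2K_3} (the $M_2^3$ result of Yuan and Yang) on components with no two vertex-disjoint triangles, and then runs a minimal-distance argument between vertex-disjoint triangle pairs, crucially using the hypothesis that every vertex lies in many triangles to kill the sparse configurations. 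Any correct approach to general $k$ would need an analogue of that dichotomy (few pairwise disjoint triangles versus a dense local structure), presumably via $\ex(n,K_3,M_j^3)$-type inputs, before a transversal of size $s$ can even be discussed. Two smaller slips: your cleaning step deletes vertices in fewer than $cn$ triangles and hence yields minimum triangle-degree $\Omega(n)$, not the $\Omega(n^2)$ you then assert; and the final estimate $t(G)\le\sum_{w\in W}e(G[N(w)])$ with $e(G[N(w)])\le \lfloor n^2/4\rfloor+O(sn)$ gives $s n^2/4+O(n)$, which is off from the target by a $\Theta(n)$ margin, so even granting the transversal, recovering the exact value and the unique extremal graph requires the stability-plus-exchange machinery you correctly identify as missing. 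As written, the proposal is a reasonable research program, not a proof, and the statement rightly remains a conjecture.
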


\section{Acknowledgements}
The research of the authors Gy\H{o}ri and Salia was partially supported by the National Research, Development and Innovation Office NKFIH, grants  K132696 and SNN-135643. 
The research of Salia was supported by the Institute for Basic Science (IBS-R029-C4).
The research of Tompkins was supported by NKFIH grant K135800.

\bibliography{References.bib}

\end{document}